\documentclass[12pt]{amsart}
\usepackage{color}
\usepackage{verbatim}

\setlength{\textheight}{8.50in} \setlength{\oddsidemargin}{0.00in}
\setlength{\evensidemargin}{0.00in} \setlength{\textwidth}{6.08in}
\setlength{\topmargin}{0.00in} \setlength{\headheight}{0.18in}
\setlength{\marginparwidth}{1.0in}
\setlength{\abovedisplayskip}{0.2in}
\setlength{\belowdisplayskip}{0.2in}

\setlength{\parskip}{0.2cm}

\usepackage{eucal}



\begin{document}
\numberwithin{equation}{section}
\newtheorem{cor}{Corollary}[section]
\newtheorem{theorem}[cor]{Theorem}
\newtheorem{prop}[cor]{Proposition}
\newtheorem{lemma}[cor]{Lemma}
\newtheorem*{lemma*}{Lemma}
\theoremstyle{definition}
\newtheorem{defi}[cor]{Definition}
\theoremstyle{remark}
\newtheorem{remark}[cor]{Remark}
\newtheorem{example}[cor]{Example}

\newcommand{\cC}{\mathcal{C}}
\newcommand{\cL}{\mathcal{L}}
\newcommand{\cH}{\mathcal{H}}
\newcommand{\cP}{\mathcal{P}}
\newcommand{\cun}{\cC^{\infty}}
\newcommand{\cz}{{\mathbb C}}
\newcommand{\hz}{{\mathbb H}}
\newcommand{\rz}{{\mathbb R}}
\newcommand{\fm}{f^{-1}}
\renewcommand{\index}{\mathrm{index}}
\newcommand{\N}{\mathbb{N}}
\newcommand{\ov}{\widetilde}
\newcommand{\oD}{\overline{D}}
\newcommand{\px}{\partial_x}
\newcommand{\py}{\partial_y}
\newcommand{\R}{{\mathbb R}}
\newcommand{\Z}{{\mathbb Z}}
\newcommand{\C}{{\mathbb C}}
\newcommand{\SM}{{\mathbb S}}
\newcommand{\Spec}{\operatorname{Spec}}
\newcommand{\supp}{\mathrm{supp}}
\newcommand{\tf}{{\tilde{f}}}\newcommand{\tg}{{\tilde{g}}}
\newcommand{\tih}{\tilde{h}}
\newcommand{\tphi}{\tilde{\phi}}
\newcommand{\vol}{\mathrm{vol}}
\newcommand{\Xe}{X_\epsilon}
\newcommand{\zz}{{\mathbb Z}}

\def\W{\mathrm{W}}
\def\A{\mathrm{A}}
\def\T{\mathrm{T}}

\def\f{\varphi}
\def\e{\varepsilon}
\def\d{\mathrm{d}}
\def\de{\delta}
\def\O{\Omega}
\def\a{\alpha}
\def\b{\beta}
\def\g{\gamma}
\def\la{\langle}
\def\ra{\rangle}
\def\res{|}
\def\SO{{\rm SO}}
\def\Spin{{\rm Spin}}
\def\U{{\rm U}}
\def\Hol{{\rm Hol}}
\def\Ric{{\rm Ric}}
\def\End{{\rm End}}
\def\tr{{\rm tr}}
\def\id{{\rm Id}}
\def\Scal{{\rm Scal}}
\def\vol{{\rm vol}}
\def\can{h_{\SM^2}}
\newcommand{\be}{\begin{equation}}
\newcommand{\ee}{\end{equation}}
\def\beq{\begin{eqnarray*}}
\def\eeq{\end{eqnarray*}}
\def\p{\psi}
\def\.{{\cdot}}
\def\n{\nabla}
\def\nm{\nabla^g}
\def\dt{{\partial_t}}
\def\dz{\partial_z}
\def\dzb{\partial_{\bar z}}

\newcommand{\D}{\mathbb{D}}
\newcommand{\pr}{\partial_r}

\title{Compact lcK manifolds with parallel vector fields}
\author{Andrei Moroianu}
\address{Andrei Moroianu \\ Universit\'e de Versailles-St Quentin \\
Laboratoire de Math\'ema\-tiques \\ UMR 8100 du CNRS\\
45 avenue des \'Etats-Unis\\
78035 Versailles, France }
\email{andrei.moroianu@math.cnrs.fr}
\date{\today}
\begin{abstract}
We show that for $n>2$  a compact locally conformally K\"ahler manifold $(M^{2n},g,J)$ carrying a non-trivial parallel vector field is either Vaisman, or globally conformally K\"ahler, determined in an explicit way by some compact K\"ahler manifold of dimension $2n-2$.
\end{abstract}

\keywords{Vaisman manifolds, lcK manifolds.} 
\maketitle

\section{Introduction}

A locally conformally K\"ahler (lcK) manifold is a Hermitian manifold $(M,g,J)$ of real dimension $2n\ge 4$ such that around each point, $g$ is conformal to a K\"ahler metric relative to $J$, cf. \cite{do}.

The differentials of the (logarithms of the) conformal factors glue up to a well-defined closed 1-form on $M$ -- called the Lee form of the lcK structure -- which is exact if and only if $(M,g,J)$ is globally conformally K\"ahler. 

Many complex manifolds which for topological reasons do no carry any K\"ahler metric, have compatible lcK metrics. For example the product metric on the Hopf manifold $\SM^1\times \SM^{2n-1}$ (with odd first Betti number) is lcK with respect to the complex structure induced from the identification
$$\SM^1\times \SM^{2n-1}\simeq (\R/\Z\times \SM^{2n-1})\simeq(\R\times \SM^{2n-1})/\Z\simeq (\C^n\setminus\{0\})/\Z.$$
The Lee form of this structure is easily computed to be the length element of $\SM^1$, and is therefore parallel. Compact lcK manifolds with parallel Lee form  are called {\em Vaisman} \cite{va}, and their structure is well-understood: they are mapping tori of automorphisms of Sasakian manifolds cf. \cite{ov}. Moreover, it was recently proved that every compact {\em homogeneous} lcK manifold is Vaisman \cite{gmo}.

In real dimension 4 it is well known that a compact complex manifold carries a compatible K\"ahler metric if and only if its first Betti number is even \cite{bu}, \cite{la}. It was generally believed that every complex surface with odd first Betti number would carry a compatible lcK structure, until Belgun  has shown that some Inoue surfaces do not carry any lcK structure \cite{be}. He also showed that every Hopf surface admits a compatible lcK metric, and classified all Vaisman complex surfaces.

In this paper we address the following question: {\em Are there non-Vaisman compact lcK manifolds which carry a non-trivial parallel 1-form?} It turns out that the answer to this question is positive, and moreover, one can describe the lcK structure of such manifolds in a very explicit way in all dimensions greater than 4 (cf. Theorem \ref{main} below). These manifolds are globally conformally K\"ahler, but the metric is not K\"ahler in general. In dimension 4 this construction still gives examples of non-Vaisman lcK manifolds carrying a parallel 1-form, but we do not know whether these are the only examples. 

A more general problem, which however will not be considered here, would be to describe all compact lcK manifolds with special holonomy (e.g. with reducible holonomy, or whose holonomy group belongs to the Berger list). Note that unlike K\"ahler manifolds, the Riemannian product of lcK manifolds is no longer lcK (at least not in a canonical way). This somehow indicates that the holonomy reduction of a lcK metric is a strong condition, which might lead in general to classification results in the vein of Theorem \ref{main}.

\section{Some preliminaries on lcK manifolds}

As explained in the introduction, a lcK manifold is a Hermitian manifold $(M,g,J)$ of real dimension $2n\ge 4$ carrying an open cover $U_\a$ and real maps $f_\a:U_\a\to\R$ such that $(U_\a,e^{-f_\a}g,J)$ are K\"ahler manifolds. Denoting $\Omega(\cdot,\cdot):=g(J\cdot,\cdot)$ the fundamental form of $M$, the above condition yields 
\be\label{doa}0=\d(e^{-f_\a}\O)=e^{-f_\a}(-\d f_\a\wedge\O+\d \O).\ee
Since the linear map $\Lambda^1M\to \Lambda^3M$ defined by $\sigma\mapsto\sigma\wedge\O$ is injective, \eqref{doa} shows that $\d f_\a=\d f_\b$ on $U_\a\cap U_\b$, so the 1-forms $\d f_\a$ glue together to a closed form $\theta$ on $M$ -- called the {\em Lee form} -- such that $\theta\res_{U_\a}=\d f_\a$ for all $\a$. The Levi-Civita covariant derivatives $\n$ and $\n^\a$ of the conformal metrics $g$ and $e^{-f_\a}g$ on $U_\a$ are related by the well known formula
$$\n_XY=\n^\a_XY+\tfrac12\left(\theta(X)Y+\theta(Y)X-\theta^\sharp g(X,Y)\right),$$
where $\theta^\sharp$ is the vector field dual to $\theta$ via the metric $g$. Using the fact that $\n^\a J=0$ on $U_\a$, we thus obtain:
\beq(\n_XJ)(Y)&=&\n_X(JY)-J(\n_XY)\\&=&\n^\a_X(JY)+\tfrac12\left(\theta(X)JY+\theta(JY)X-\theta^\sharp g(X,JY)\right)\\&&-J\left(\n^\a_XY+\tfrac12\left(\theta(X)Y+\theta(Y)X-\theta^\sharp g(X,Y)\right)\right)\\
&=&\tfrac12\left(\theta(JY)X-\theta^\sharp g(X,JY)-\theta(Y)JX+J\theta^\sharp g(X,Y)\right).
\eeq
Identifying 1-forms with vectors using the metric $g=:\la\cdot,\cdot\ra$, this relation can be equivalently written as 
\be \label{lck1} (\nabla_XJ)Y=\tfrac12\left(\la X,Y\ra J\theta+\theta(JY)X+\la JX,Y\ra \theta-\theta(Y)JX\right)\qquad\forall X,Y\in\T M,
\ee
or else 
\be \label{lck} \nabla_X\Omega=\tfrac12(X\wedge J\theta+JX\wedge\theta)\qquad\forall X\in\T M.
\ee
If $e_i$ denotes a local orthonormal basis of $\T M$ we have $\Omega=\frac12\sum_ie_i\wedge Je_i$, so by \eqref{lck} we immediately get
\be\label{do}\d\Omega=\sum_ie_i\wedge\nabla_{e_i}\Omega=\theta\wedge\Omega,
\ee
which also follows from \eqref{doa}.

\section{Parallel vector fields on lcK manifolds}

Assume throughout this section that that the dimension of $M$ is strictly larger than $4$ and that $V$ is a non-trivial parallel vector field on $M$. We can of course rescale $V$ such that it has unit length. 
Consider the components of $\theta$ along $V$ and $JV$:
\be \label{a,b}a:=\theta(V),\qquad b:=\theta(JV).
\ee
Since $\nabla V=0$ we have $\nabla_X(JV)=(\nabla_XJ)V$, so using \eqref{lck} we get
\be\label{njv}\nabla_X(JV)=\tfrac12\left(\la X,V\ra J\theta+bX+\la JX,V\ra\theta-aJX\right)\qquad\forall X\in\T M.
\ee
In particular we have
\be\label{nvjv}\nabla_V(JV)=\tfrac12\left(J\theta+bV-aJV\right).
\ee
We also infer from \eqref{njv}
\be\label{djv} \d (JV)=e_i\wedge\nabla_{e_i}(JV)=\tfrac12 V\wedge J\theta-\tfrac12 JV\wedge\theta-a\Omega,
\ee
whence using \eqref{do} and \eqref{djv}:
\beq 0&=& \d^2(JV)=-\tfrac12V\wedge\d(J\theta)-\tfrac12\d(JV)\wedge\theta-\d a\wedge\Omega-a\theta\wedge\Omega\\
&=&-\tfrac12V\wedge\d(J\theta)-\tfrac14 V\wedge J\theta\wedge\theta+\tfrac12 a\Omega\wedge\theta-\d a\wedge\Omega-a\theta\wedge\Omega\\
&=&-\tfrac12V\wedge\left(\d(J\theta)+\tfrac12 J\theta\wedge\theta\right)-\left(\d a+\tfrac12 a\theta\right)\wedge\Omega.
\eeq
Taking the exterior product with $V$ in this relation yields
$$V\wedge\left(\d a+\tfrac12 a\theta\right)\wedge\Omega=0,$$
and since by assumption $n>2$ we get $V\wedge\left(\d a+\tfrac12 a\theta\right)=0$, so there exists some function $f$ on $M$ such that
\be\label{da} \d a+\tfrac12 a\theta=fV.
\ee
Since $\theta$ is closed and $V$ is parallel, the Kostant formula yields
\be\label{nvt}\nabla_V\theta=\cL_V\theta=\d(V\lrcorner\theta)=\d a=fV-\tfrac12 a\theta,
\ee
and a direct computation using \eqref{lck1} gives
\be\label{nvjt}\nabla_V(J\theta)=\tfrac12b\theta+\left(f-\tfrac12|\theta|^2\right) JV.
\ee
Since $V$ is parallel we have $R_{V,X}=0$ for every vector field $X$, where $R_{X,Y}:=[\nabla_X,\nabla_Y]-\nabla_{[X,Y]}$ denotes the curvature tensor of $\nabla$. Consequently, taking $X$ to be $\nabla$-parallel at some point $x\in M$, we obtain
\be\label{nvx}\nabla_V\nabla_X\Omega-\nabla_X\nabla_V\Omega=R_{V,X}\Omega=0\ee
at $x$. Using \eqref{lck1}, \eqref{lck}, \eqref{nvt} and \eqref{nvjt} we compute at $x$:
\beq 2\nabla_V\nabla_X\Omega&=&\nabla_V(X\wedge J\theta+JX\wedge\theta)=X\wedge\nabla_V(J\theta)+JX\wedge\nabla_V\theta+(\nabla_VJ)X\wedge\theta\\
&=&X\wedge\left(\tfrac12b\theta+\left(f-\tfrac12|\theta|^2\right) JV\right)+JX\wedge\left(fV-\tfrac12 a\theta\right)\\
&&+\tfrac12\left(\la V,X\ra J\theta+\theta(JX)V+\la JV,X\ra \theta-\theta(X)JV\right)\wedge\theta
\eeq
and 
\beq 2\nabla_X\nabla_V\Omega&=&\nabla_X(V\wedge J\theta+JV\wedge\theta)=V\wedge\nabla_X(J\theta)+JV\wedge\nabla_X\theta+(\nabla_XJ)V\wedge\theta\\
&=&V\wedge\nabla_X(J\theta)+JV\wedge\nabla_X\theta\\
&&+\tfrac12\left(\la X,V\ra J\theta+bX+\la JX,V\ra \theta-aJX\right)\wedge\theta
\eeq
After straightforward simplifications we get from \eqref{nvx}:
\beq 0&=& \nabla_V\nabla_X\Omega-\nabla_X\nabla_V\Omega\\
&=&\left(\nabla_X(J\theta)+fJX-\tfrac12\theta(JX)\theta\right)\wedge V+\left(\nabla_X\theta+\left(f-\tfrac12|\theta|^2\right)X+\tfrac12\theta(X)\theta\right)\wedge JV.
\eeq
This relation is tensorial in $X$, so it actually holds at every point of $M$.

Remark now that if $A\wedge V+B\wedge JV=0$ for some vectors $A$ and $B$, then both vectors belong to the plane generated by $V$ and $JV$. The previous relation thus shows that there exist some 1-forms $\mu$ and $\nu$ such that 
\be\label{nt}\nabla_X\theta+\left(f-\tfrac12|\theta|^2\right)X+\tfrac12\theta(X)\theta=\mu(X)V+\nu(X)JV,\qquad\forall X\in\T M.
\ee
We take the exterior product with $X$ in this relation and sum over some local orthonormal basis $X=e_i$. As $\d\theta=0$, we get $\mu\wedge V+\nu\wedge JV=0$, hence by the previous remark there exist smooth functions $\a,\ \b,\  \g$ on $M$ such that $\mu=\a V-\g JV$ and $\nu=\g V+\b JV$. Taking $X=V$ in Equation \eqref{nt} and using \eqref{nvt} yields
$$fV-\tfrac12 a\theta+\left(f-\tfrac12|\theta|^2\right)V+\tfrac12a\theta=\a V +\g JV,$$
whence $\g=0$ and 
\be\label{alpha}\a=2f-\tfrac12|\theta|^2.\ee 
Equation \eqref{nt} thus becomes 
\be\label{nt1}\nabla_X\theta=\left(f-\a\right)X-\tfrac12\theta(X)\theta+\a\la X,V\ra V+\b\la X,JV\ra JV,\qquad\forall X\in\T M.
\ee
Using this relation together with \eqref{lck1} we readily obtain
\be\label{njt}\nabla_X(J\theta)=-fJX+\tfrac12\theta(JX)\theta+\a\la X,V\ra JV-\b\la X,JV\ra V,\qquad\forall X\in\T M.
\ee
In particular the exterior derivative of $J\theta$ reads
\be\label{djt}\d (J\theta)=e_i\wedge\nabla_{e_i}(J\theta)=-2f\Omega+\tfrac12\theta\wedge J\theta+(\a+\b)V\wedge JV.
\ee
We now take the scalar product with $V$ in \eqref{njt} and obtain
\be\label{nxjt}\la\nabla_X(J\theta),V\ra=f\la X,JV\ra+\tfrac12a\theta(JX)-\b\la X,JV\ra ,\qquad\forall X\in\T M.\ee
On the other hand 
$$\la\nabla_X(J\theta),V\ra=\nabla_X\la J\theta,V\ra=-X(b),$$
so fromh \eqref{nxjt} we obtain
\be\label{db}\d b=(\b-f)JV+\tfrac12aJ\theta.\ee
Taking the exterior derivative in this equation and using \eqref{djv}, \eqref{da} and \eqref{djt} yields
\beq 0&=& \d^2 b=\d(\b-f)\wedge JV+(\b-f) (\tfrac12V\wedge J\theta-\tfrac12 JV\wedge\theta-a\Omega)\\
&&+\tfrac12\left(fV-\tfrac12 a\theta\right)\wedge J\theta+\tfrac12a\left(-2f\Omega+\tfrac12\theta\wedge J\theta+(\a+\b)V\wedge JV\right)\\
&=&\d(\b-f)\wedge JV+\tfrac12\beta V\wedge J\theta-\tfrac12(\b-f) JV\wedge\theta+\tfrac12a(\a+\b)V\wedge JV-a\beta\Omega.
\eeq
This shows in particular that $V\wedge JV\wedge(a\beta\Omega)=0$, whence 
\be\label{ab}a\beta=0.\ee 
Reinjecting in the previous equation yields
\be\label{dbf}\d(\b-f)\wedge JV+\tfrac12\beta V\wedge J\theta-\tfrac12(\b-f) JV\wedge\theta+\tfrac12a\a V\wedge JV=0.\ee
We now use \eqref{djt} together with \eqref{do} and \eqref{djv}:
\beq 0&=&\d^2(J\theta)=-2\d f\wedge\Omega-2f\theta\wedge\Omega-\tfrac12\theta\wedge\left(-2f\Omega+\tfrac12\theta\wedge J\theta+(\a+\b)V\wedge JV\right)\\&&
+\d(\a+\b)\wedge V\wedge JV-(\a+\b)V\wedge\left(\tfrac12 V\wedge J\theta-\tfrac12 JV\wedge\theta-a\Omega\right)\\
&=&\left(-2\d f-f\theta+a(\a+\b)V\right)\wedge\Omega+\d(\a+\b)\wedge V\wedge JV.
\eeq
As $n>2$, this shows that 
\be\label{df}2\d f+f\theta=a\a V.\ee 
Using this relation together with \eqref{dbf} yields 
$$(\d \b+\tfrac12 \b\theta)\wedge JV+\tfrac12 \b V\wedge J\theta=0.$$
We take the interior product with $V$ in this relation and obtain
$$V(\b)JV+\tfrac12 \b J\theta+\tfrac12b\beta V=0.$$
Since by \eqref{ab}, $JV$ is orthogonal to $\b(J\theta+bV)$, this implies that $V(\beta)=0$ and $\beta(J\theta+bV)=0$.

We now use \eqref{nt1} in order to express the differential of the square norm $|\theta|^2$. For every tangent vector $X$ we have
\beq X(|\theta|^2)&=&2\la \nabla_X\theta,\theta\ra=2\la \left(f-\a\right)X-\tfrac12\theta(X)\theta+\a\la X,V\ra V+\b\la X,JV\ra JV,\theta\ra\\
&=&\la X,2(f-\a)\theta-|\theta|^2\theta+2a\a V+2b\b JV\ra,
\eeq
so from \eqref{alpha} we get 
$$\d |\theta|^2=2(f-\a)\theta-|\theta|^2\theta+2a\a V+2b\b JV=-2f\theta+2a\a V+2b\b JV,$$
whence using \eqref{df}:
\be \label{dalpha} \d\a=\d\left(2f-\tfrac12|\theta|^2\right)=a\a V-f\theta-(-f\theta+a\a V+b\b JV)=-b\b JV.
\ee

We are now ready to prove the key result of this section

\begin{lemma} If $M$ is compact, the Lee form $\theta$ of the lcK structure belongs to the space generated by $V$ and $JV$. Equivalently, 
\be\label{theta}\theta=aV+bJV.\ee
\end{lemma}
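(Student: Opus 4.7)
My plan is to establish the integral identity
\begin{equation*}
\int_M \bigl(|\theta|^2 - a^2 - b^2\bigr)\,\vol = 0
\end{equation*}
and conclude by pointwise nonnegativity. Indeed, since $V$ and $JV$ are orthonormal, the orthogonal decomposition $\theta = (aV + bJV) + \theta^\perp$ gives $|\theta|^2 - a^2 - b^2 = |\theta^\perp|^2 \geq 0$ at every point, so vanishing of the integral forces $\theta^\perp \equiv 0$, which is \eqref{theta}. The integral identity will be assembled from three applications of the divergence theorem on the compact manifold $M$.

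First I would compute $\mathrm{div}(aV) = V(a)$, using that $V$ is parallel, then pair \eqref{da} with $V$ to obtain $V(a) = f - \tfrac12 a^2$. Integrating yields
\begin{equation*}
\int_M f\,\vol = \tfrac12 \int_M a^2\,\vol.
\end{equation*}
Next I would trace \eqref{njv} over a local orthonormal basis to obtain $\mathrm{div}(JV) = (n-1)b$: the $bX$ term contributes $nb$, each of the two $J$-twisted terms contributes $-\tfrac12 b$ (using $J^* = -J$), and the $-aJX$ piece traces to zero. Combining with $JV(b) = (\b - f) + \tfrac12 a^2$, obtained by pairing \eqref{db} with $JV$ and using $(J\theta)(JV) = \theta(V) = a$, this gives $\mathrm{div}(bJV) = (n-1)b^2 + (\b - f) + \tfrac12 a^2$. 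Integrating and substituting the first identity produces
\begin{equation*}
\int_M \b\,\vol = -(n-1)\int_M b^2\,\vol.
\end{equation*}

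Finally, tracing \eqref{nt1} over an orthonormal basis and eliminating $\a$ via \eqref{alpha} gives
\begin{equation*}
\mathrm{div}(\theta^\sharp) = 2n(f - \a) - \tfrac12 |\theta|^2 + \a + \b = (2 - 2n)f + (n-1)|\theta|^2 + \b.
\end{equation*}
Integrating this and substituting the two integral identities above collapses everything to $(n-1)\int_M(|\theta|^2 - a^2 - b^2)\,\vol = 0$, and since $n > 2$ and the integrand is nonnegative, the lemma follows. All the substantive input has already been extracted in the derivations of \eqref{da}--\eqref{dalpha}, so the argument is essentially bookkeeping; the only technical point is ensuring that the $\b$ contributions from $\mathrm{div}(bJV)$ and $\mathrm{div}(\theta^\sharp)$ enter with compatible coefficients, which they do because each appears linearly with coefficient $1$ in the respective traces.
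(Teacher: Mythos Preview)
Your proposal is correct and follows essentially the same route as the paper: both arguments establish $\int_M(|\theta|^2-a^2-b^2)\,\d\mu_g=0$ by combining the divergence theorem with the traces of \eqref{njv} and \eqref{nt1} and the identity $\int_M f=\tfrac12\int_M a^2$ coming from \eqref{da}. The only difference is bookkeeping: the paper packages the second integral as $\int_M(\a-f)=-\tfrac12\int_M b^2$ (after writing $\beta=V(a)+JV(b)$ and integrating by parts against $\delta(JV)$), whereas you isolate $\int_M\beta$ directly via $\mathrm{div}(bJV)$; the two computations are equivalent.
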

\begin{proof} Let $\d\mu_g$ denote the volume form of $M$. Taking the trace in \eqref{njv} we get $\delta (JV)=\frac{2-n}2b$ and from \eqref{nt1} together with \eqref{alpha} we readily compute 
$$\delta\theta=n(\a-f)+\tfrac12|\theta|^2-\a-\b=(n-2)(\a-f)-\beta.$$
Moreover, taking the scalar product with $V$ in \eqref{njt} and choosing $X=JV$ we obtain
$$-JV(b)=\la\nabla_{JV}J\theta,V\ra=f-\tfrac12a^2-\beta,$$
which together with \eqref{da} yields $\beta=JV(b)+V(a)$.
Using the Stokes Theorem several times we obtain
$$\int_Mf\d\mu_g=\int_M \left(V(a)+\frac12 a^2\right)\d\mu_g=\int_M\left(a\delta V+\frac12 a^2\right)\d\mu_g=\frac12\int_M a^2\d\mu_g,$$
and
\beq\int_M(\a-f)\d\mu_g&=&\frac1{n-2}\int_M \left(\beta+\delta\theta\right)\d\mu_g=\frac1{n-2}\int_M\beta\d\mu_g\\&=&\frac1{n-2}\int_M(V(a)+JV(b)\d\mu_g=\frac1{n-2}\int_M(a\delta V+b\delta(JV))\d\mu_g\\&=&\frac1{n-2}\int_M\frac{2-n}2b^2\d\mu_g=-\frac12\int_M b^2\d\mu_g,
\eeq
so finally
\beq \int_M|\theta-aV-bJV|^2\d\mu_g&=&\int_M(|\theta|^2-a^2-b^2)\d\mu_g=\int_M( 4f-2\a-a^2-b^2)\d\mu_g\\
&=&\int_M( 2f-2(\a-f)-a^2-b^2)\d\mu_g=0.
\eeq
\end{proof}

From now on $M$ will be assumed compact.

\begin{lemma} \label{l2} The following relations hold: $ab=0$, $f=\frac{a^2}2$, $\alpha=\frac12(a^2-b^2)$.
\end{lemma}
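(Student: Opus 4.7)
The plan is to exploit the closedness $d\theta = 0$ together with the relation $\theta = aV + bJV$ just established in the previous lemma. First I substitute $\theta = aV + bJV$ (so $J\theta = aJV - bV$) into \eqref{djv}; the diagonal terms $V\wedge V$, $JV \wedge JV$ vanish while the cross terms combine to yield $d(JV) = aV \wedge JV - a\Omega$. Since $V$ is parallel, $dV = 0$, and expanding $d\theta = da \wedge V + db \wedge JV + b\, d(JV)$ gives
\begin{equation*}
0 = d\theta = da \wedge V + db \wedge JV + abV \wedge JV - ab\,\Omega.
\end{equation*}

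Next I would split $TM = \langle V, JV\rangle \oplus H$ orthogonally. Since $J$ preserves $\langle V, JV\rangle$, it also preserves $H$, so $(H, g|_H, J|_H)$ is a Hermitian space of real dimension $2n - 2$ with its own fundamental 2-form $\Omega'$, and $\Omega = V \wedge JV + \Omega'$. Substituting collapses the displayed equation to $da \wedge V + db \wedge JV = ab\,\Omega'$. Each summand on the left carries a factor of $V$ or $JV$, so its component in $\Lambda^2 H^*$ vanishes; projecting the equation onto $\Lambda^2 H^*$ therefore yields $ab\,\Omega' = 0$. Since $n > 2$ makes $\dim H \geq 4$, the form $\Omega'$ is nondegenerate, nonzero at every point, and hence $ab \equiv 0$ on $M$.

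To show $f = a^2/2$ I argue pointwise on two open sets. On $\{a \neq 0\}$, $ab = 0$ forces $b \equiv 0$, so $JV(b) = 0$; the $JV$-component of \eqref{db} then reads $0 = \beta - f + \tfrac12 a^2$. Combining with \eqref{ab}, which says $a\beta = 0$, and $a \neq 0$, I deduce $\beta = 0$ and hence $f = \tfrac12 a^2$ there. On the interior of $\{a = 0\}$, $a \equiv 0$ gives $V(a) = 0$; but the $V$-component of \eqref{da} reads $V(a) = f - \tfrac12 a^2$, so $f = 0 = \tfrac12 a^2$ there as well. Continuity across the common boundary of these two open sets propagates $f = \tfrac12 a^2$ to all of $M$. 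The last identity $\alpha = \tfrac12(a^2 - b^2)$ then follows immediately from $|\theta|^2 = a^2 + b^2$ and the definition $\alpha = 2f - \tfrac12|\theta|^2$ in \eqref{alpha}.

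The main obstacle is extracting $ab = 0$ cleanly from the single scalar relation $d\theta = 0$: the key move is recognizing the orthogonal decomposition of $\Omega$ afforded by the $J$-invariant 2-plane $\langle V, JV\rangle$, and then performing the horizontal/vertical type argument that isolates the $\Lambda^2 H^*$-component. Once this algebraic step is made, the remaining identities $f = a^2/2$ and $\alpha = \tfrac12(a^2 - b^2)$ are straightforward consequences of identities \eqref{da}, \eqref{db}, \eqref{ab} and \eqref{alpha} already derived in the section.
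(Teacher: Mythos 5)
Your proof is correct, and it takes a genuinely different route from the paper's. The paper differentiates the identity \eqref{theta} covariantly, i.e.\ computes $\nabla_X\theta$ from $\theta=aV+bJV$ using \eqref{njv}, \eqref{da} and \eqref{db}, and then matches the resulting expression term by term against \eqref{nt1}: the coefficient of $JX$ gives $ab=0$, the coefficient of $X$ gives $f-\alpha=\tfrac12 b^2$, and combining with \eqref{alpha} yields the other two identities — everything drops out of one tensorial comparison, pointwise, with no case distinction. You instead use only the skew-symmetric (hence weaker) piece of that information, namely $\mathrm{d}\theta=0$, and recover $ab=0$ by projecting onto $\Lambda^2$ of the horizontal distribution $H=\langle V,JV\rangle^\perp$, where the nondegeneracy of $\Omega'=\Omega-V^\flat\wedge JV^\flat$ does the work; this computation is clean and correct (your formula $\mathrm{d}(JV)=aV\wedge JV-a\Omega$ checks out from \eqref{djv}). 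Because you discarded the symmetric part of $\nabla\theta$, you then have to retrieve $f=\tfrac12 a^2$ from the scalar identities \eqref{da}, \eqref{db} and \eqref{ab}, which forces the two-open-sets-plus-density argument ($\{a\neq0\}$ and the interior of $\{a=0\}$ cover a dense subset, and both sides are continuous); that argument is valid but is extra machinery the paper's single algebraic identification avoids. In short: your approach is more economical in what it extracts from the geometry (only $\mathrm{d}\theta=0$ rather than the full $\nabla\theta$) at the cost of a slightly more delicate pointwise/continuity bookkeeping, while the paper's is a single brute-force tensor comparison. Both are complete proofs.
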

\begin{proof} 
Taking the covariant derivative in \eqref{theta} with respect to some arbitrary vector X and using \eqref{njv}, \eqref{da} and \eqref{db} yields:
\beq \nabla_X\theta&=&X(a)V+X(b)JV+b\n_XJV\\
&=&f\la X,V\ra V-\tfrac a2\la X,aV+bJV\ra V+(\beta-f)\la X,JV\ra JV+\tfrac a2\la X,aJV-bV\ra JV\\
&&+\tfrac12 b\left(\la X,V\ra(aJV-bV)+bX+\la JX,V\ra(aV+bJV)-aJX\right)\\
&=& \tfrac12b^2X-\tfrac12abJX+\left(f-\tfrac{a^2}2-\tfrac{b^2}2\right)\la X,V\ra V-ab\la X,JV\ra V\\
&&+\left(\beta-f+\tfrac{a^2}2-\tfrac{b^2}2\right)\la X,JV\ra JV.
\eeq
Comparing with \eqref{nt1} we thus get:
\beq \nabla_X\theta&=&\left(f-\a\right)X-\tfrac12\theta(X)\theta+\a\la X,V\ra V+\b\la X,JV\ra JV\\
&=&\left(f-\a\right)X +\tfrac12\la X,aV+bJV\ra(aV+bJV)+\a\la X,V\ra V+\b\la X,JV\ra JV\eeq
and identifying the corresponding terms yields the result.
\end{proof}

Using Lemma \ref{l2} we now get from \eqref{da}:
$$\d a=fV-\tfrac12 a\theta=fV-\tfrac12a^2V-\tfrac12abJV=0,$$
thus showing that $a$ is constant on $M$. We distinguish two cases:

{\bf Case 1: $a\ne 0$.} From Lemma \ref{l2} we must have $b=0$, whence $\theta=aV$ is parallel, so $(M,g,J)$ is Vaisman and the parallel vector field $V$ is proportional to the Lee form.

{\bf Case 2: $a= 0$.} From Lemma \ref{l2} again we get $f=0$, $\alpha=-\frac12b^2$ and $\theta=bJV$. Equation \eqref{njv} now reads
\be\label{njv1}\nabla_X(JV)=\tfrac12b\left(X-\la X,V\ra V-\la X,JV\ra JV\right)\qquad\forall X\in\T M, 
\ee
which by symmetrization gives:
\be\label{njv2}\mathcal{L}_{JV}g=b(g-V^\flat\otimes V^\flat-JV^\flat\otimes JV^\flat).
\ee
\begin{lemma}
The universal cover $(\tilde M,\tilde g,\tilde \Omega)$ of $(M,g,\Omega)$ is (holomorphically) isometric to $\R^2\times (N,g_N,\Omega_N)$ endowed with the metric $\d s^2+\d t^2+e^{2c(t)}g_N$ and the K\"ahler form $\d s\wedge\d t+e^{2c(t)}\Omega_N$, for some complete simply connected K\"ahler manifold $(N,g_N,\Omega_N)$ of real dimension $2n-2$ and some smooth real function $c$.
\end{lemma}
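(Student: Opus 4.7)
The plan is to work on the universal cover $(\tilde M,\tilde g,J)$, on which all the identities derived earlier still hold, and to produce the claimed product structure using the commuting vector fields $V$ and $JV$.

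First I would construct global coordinates $s,t$ on $\tilde M$. Identifying vectors with 1-forms via $g$ as in the paper, $V$ is closed since it is parallel, and substituting $a=0$ and $J\theta=-bV$ (from $\theta=bJV$) into \eqref{djv} yields $\d(JV)=0$ as well. Simple connectedness of $\tilde M$ gives functions $s,t\colon\tilde M\to\R$ with $\d s=V$ and $\d t=JV$. Equation \eqref{nvjv} specialized to $a=0$, $J\theta=-bV$ forces $\nabla_V(JV)=0$, hence $[V,JV]=0$; as $M$ is compact, the flows of $V$ and $JV$ are complete and commute, so they define an $\R^2$-action on $\tilde M$.

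Next I would identify the transverse factor. The distribution $\mathcal{D}:=\{V,JV\}^\perp$ is $J$-stable, and \eqref{njv1} reduces on $\mathcal{D}$ to $\nabla_X(JV)=\tfrac{b}{2}X$; combined with $\nabla V=0$, a direct calculation gives $g([X,Y],V)=g([X,Y],JV)=0$ for $X,Y\in\mathcal{D}$, so $\mathcal{D}$ is involutive. The submersion $(s,t)\colon\tilde M\to\R^2$ admits the complete plane field generated by $V,JV$ as horizontal complement, so by Ehresmann's theorem it is a locally trivial fibration; since $\R^2$ is contractible and $\tilde M$ is connected and simply connected, the bundle is globally trivial with connected simply connected fiber. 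Letting $N$ be the leaf of $\mathcal{D}$ through a base point at which $s=t=0$, one obtains a diffeomorphism $\tilde M\cong\R^2\times N$ under which $\partial_s=V$ and $\partial_t=JV$. Because $J$ preserves $\mathcal{D}$ and sends $\partial_s$ to $\partial_t$, it splits as the product of the standard $J$ on $\R^2$ and an almost complex structure $J_N$ on $N$, integrable since $J$ is and $\mathcal{D}$ is $J$-stable.

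Finally I would pin down the metric and the K\"ahler condition on $N$. Writing $\tilde g=\d s^2+\d t^2+h$ with $h$ the $\mathcal{D}$-part, the Killing equation $\mathcal{L}_Vg=0$ gives $\partial_sh=0$, while \eqref{njv2} restricted to $\mathcal{D}$ reads $\partial_th=bh$. Equation \eqref{db} with $a=f=0$ shows $\d b$ is a multiple of $\d t$, hence $b=b(t)$, and integration yields $h=e^{2c(t)}g_N$ with $2c'(t)=b(t)$ and $g_N:=h|_{t=0}$. The fundamental form is then $\tilde\Omega=\d s\wedge\d t+e^{2c(t)}\Omega_N$, and inserting it together with $\tilde\theta=2c'(t)\,\d t$ into the lcK identity \eqref{do}, $\d\tilde\Omega=\tilde\theta\wedge\tilde\Omega$, collapses to $\d\Omega_N=0$, so $(N,g_N,J_N,\Omega_N)$ is K\"ahler; completeness of $g_N$ follows from that of $\tilde g$. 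The principal obstacle is the topological step of upgrading the local Frobenius splitting to the global diffeomorphism $\tilde M\cong\R^2\times N$; this is where simple connectedness of $\tilde M$, contractibility of $\R^2$ and completeness of the flows of $V$ and $JV$ are all essential, after which the metric and complex-structure computations are routine.
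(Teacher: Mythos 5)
Your proof is correct and follows essentially the same route as the paper: split $\T\tilde M=\la V\ra\oplus\la JV\ra\oplus D$, use the closedness of $V^\flat$ and $(JV)^\flat$ together with \eqref{njv1}, \eqref{njv2} and \eqref{db} to get $g=\d s^2+\d t^2+h$ with $\partial_s h=0$ and $\partial_t h=b(t)h$, and integrate. The only real difference is that you justify the global splitting $\tilde M\cong\R^2\times N$ via Ehresmann's fibration theorem with the complete horizontal plane field spanned by $V,JV$, which is a welcome sharpening of the paper's terse appeal to ``the Frobenius theorem applied to the universal cover.''
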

\begin{proof} 
The tangent bundle of $M$ is the direct sum of three orthogonal distributions: $\T M=\la V\ra\oplus \la JV\ra\oplus D$, where $D:=\la V,JV\ra^\perp$. Since $\d V=0$ (as $V$ is parallel) and $\d(JV)=0$ by \eqref{njv1}, the distribution $D$ is involutive. From  \eqref{njv1} again we easily check that $[V,JV]=0$, $[V,D]\subset D$ and $[JV,D]\subset D$.
The Frobenius theorem shows that there exist local coordinates $(s,t,x)\in\R\times\R\times \R^{n-2}$ around each point of $M$, such that $V=\partial_s$, $JV=\partial_t$ and the metric $g$ has the form $g=\d s^2+\d t ^2+h(s,t)$, for some family of metrics $h(s,t)$ on $\R^{n-2}$. 

We first note that for each $s,t$ the metric $h(s,t)$ is K\"ahler. Indeed, $J$ defines by restriction to $D$ an integrable complex structure on each local leaf $\R^{n-2}$, whose K\"ahler form $\Omega(s,t)$ is just the restriction of $\Omega$. Consequently, $\d \Omega(s,t)$ is the restriction to the leaves of $\d \Omega=\theta\wedge\Omega$, which vanishes since $\theta|_D=0$.

Now, since $V$ is parallel on $M$, its flow preserves $g$, whence $h(s,t)=h(t)$ is independent on $s$. From \eqref{db} we see that $b=b(t)$ depends on $t$ alone. Moreover, \eqref{njv2} yields 
$$\frac{\partial h}{\partial t}=b(t)h,$$
whence 
$$h(s,t)=e^{\int_0^tb(\tau) \d\tau}h(0).$$
This proves the local version of the lemma, by defining $c(t):=\tfrac12\int_0^tb(\tau) \d\tau$ and $g_N:=h(0)$. The global statement follows from the Frobenius theorem applied to the universal cover of $M$.
\end{proof} 
The fundamental group of $M$ induces a co-compact group of isometries of the globally conformally K\"ahler manifold $(\tilde M,\tilde g):=(\R^2\times N,\d s^2+\d t^2+e^{2c(t)}g_N)$. Our aim is to show that the Lee form of $M$ is exact. Note that the K\"ahler form of $\tilde M$ is $\tilde\Omega=\d s\wedge\d t+e^{2c(t)}\Omega_N$, which satisfies 
$$\d\tilde\Omega=2c'(t) \d t\wedge e^{2c(t)}\Omega_N=2c'(t) \d t\wedge\tilde\Omega=2\d c\wedge\tilde\Omega,$$
showing that the Lee form of $\tilde M$ is $2\d c$. It suffices to check that the function $c$ is $\Gamma$-invariant. This follows from a more general statement:

\begin{lemma}
Assume that  $(N^d,g_N)$ is a complete simply connected Riemannian manifold of dimension $d\ge 1$, $c:\R\to\R$ is a smooth function and $\Gamma$ is a co-compact group acting totally discontinuously by isometries on the Riemannian manifold $(\R^2\times N,\d s^2+\d t^2+e^{2c(t)}g_N)$. Assume moreover that $\Gamma$ preserves the vector fields $\partial_s$ and $\partial_t$.
Then the function $c$ is invariant by $\Gamma$.
\end{lemma}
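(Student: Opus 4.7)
The plan is to unravel the structure of each $\gamma\in\Gamma$ imposed by preservation of $\partial_s,\partial_t$, extract a group cocycle $C\colon\Gamma\to\R$ measuring the failure of $c$ to be $\Gamma$-invariant, and show $C\equiv 0$ by a case split on the subgroup $T\subset\R$ of $t$-shifts. First, the conditions $\d\gamma(\partial_s)=\partial_s$ and $\d\gamma(\partial_t)=\partial_t$ integrate to $\gamma(s,t,x)=(s+f(x),t+h(x),\psi(x))$, and the isometry condition $\gamma^*g=g$ kills the mixed terms in $\d f$ and $\d h$, forcing $f\equiv\alpha_\gamma$ and $h\equiv\beta_\gamma$ to be constants. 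The remaining $N$-part reads $\psi_\gamma^*g_N=e^{2c(t)-2c(t+\beta_\gamma)}g_N$; since the left-hand side is $t$-independent, $C_\gamma:=c(t+\beta_\gamma)-c(t)$ is independent of $t$. The map $\gamma\mapsto C_\gamma$ is an additive homomorphism $\Gamma\to\R$ factoring through $T:=\{\beta_\gamma:\gamma\in\Gamma\}\subset\R$, and the lemma is equivalent to $C\equiv 0$.

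I would then split on $T$ (which is either $\{0\}$, dense, or discrete; the first is ruled out by compactness of $M$). If $T$ is dense in $\R$, continuity of $c$ extends $c(t+\beta)-c(t)=c(\beta)-c(0)$ from $\beta\in T$ to all $\beta\in\R$ (Cauchy's equation), so $c(t)=\mu t+c(0)$ for some $\mu\in\R$. Since $\partial_t$ is $\Gamma$-invariant it descends to $M$, with divergence $dc'(t)=d\mu$ constant (as computed from $\d v_g=e^{dc(t)}\,\d s\wedge\d t\wedge\d V_{g_N}$), and Stokes on the compact $M$ gives $d\mu\,\vol(M)=0$, hence $\mu=0$. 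If instead $T=\lambda\Z$ is discrete, the $t$-projection yields a fibration $M\to S^1=\R/T$ with compact fibre $F=S_{t_0}/\Gamma_0$, where $S_{t_0}:=(\R_s\times\{t_0\}\times N,\d s^2+e^{2c(t_0)}g_N)$ and $\Gamma_0:=\ker(\Gamma\to T)$ acts on $S_{t_0}$ isometrically --- and also on the reference metric $\d s^2+g_N$, since $C_\gamma=0$ on $\Gamma_0$ forces $\psi_\gamma\in\mathrm{Isom}(g_N)$. Any $\gamma_*\in\Gamma$ with $\beta_{\gamma_*}=\lambda$ restricts to an isometry $(S_{t_0},g_{t_0})\to(S_{t_0+\lambda},g_{t_0+\lambda})$ and, by normality of $\Gamma_0$ in $\Gamma$, descends to an isometry $\Phi_*\colon(F,\bar g_{t_0})\to(F,\bar g_{t_0+\lambda})$. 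A direct computation gives $\vol(F,\bar g_{t_0})=e^{dc(t_0)}V_0$ with $V_0:=\vol(F,\d s^2+g_N)$ independent of $t_0$, so volume preservation under $\Phi_*$ forces $c(t_0+\lambda)=c(t_0)$, i.e.\ $C_\lambda=0$.

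The main obstacle is the discrete case: one must carefully realize $F$ as the compact fibre of $M\to S^1$, exploit the normality of $\Gamma_0$ to descend the slice isometry to a diffeomorphism of the single topological manifold $F$ between its two differently-warped metrics $\bar g_{t_0},\bar g_{t_0+\lambda}$, and track that these two fibre volumes differ exactly by the factor $e^{d(c(t_0+\lambda)-c(t_0))}$, so that the isometry-volume comparison under $\Phi_*$ yields precisely $C_\lambda=0$. The dense case, by contrast, falls out immediately from Stokes once $c$ is known to be affine.
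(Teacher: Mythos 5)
Your proof is correct, but it follows a genuinely different route from the paper's. The paper argues by contradiction: if $c$ is not $\Gamma$-invariant, some $\psi_{\gamma_0}$ is a strict contraction of the complete space $(N,d_N)$; the Banach fixed point theorem produces a point $x_0$ which (by a conjugation argument using total discontinuity) is fixed by every $\psi_\gamma$, and then the $\Gamma$-invariant continuous function $e^{c(t)}d_N(x,x_0)$ is shown to be unbounded because $c$ is onto $\R$, contradicting co-compactness. You instead package the failure of invariance into the homomorphism $C\colon\Gamma\to\R$, $C_\gamma=c(t+t_\gamma)-c(t)$, and split on the translation subgroup $T\subset\R$: in the dense case Cauchy's functional equation forces $c$ affine and the divergence theorem applied to $\partial_t$ on the compact quotient kills the slope; in the discrete case you compare the volumes of the fibres of $M\to\R/T$, which scale as $e^{dc(t_0)}$ and must agree under the isometry induced by $\gamma_*$. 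Both arguments use co-compactness in an essential way; the paper's is shorter and hinges on completeness of $N$ via the fixed-point theorem, whereas yours avoids metric fixed-point arguments entirely in favour of integration, at the cost of a case analysis and of two standard facts you should justify in a full write-up: that a subgroup of $\R$ is either cyclic or dense, and that the fibre of $M\to\R/T$ over $[t_0]$ is compact and naturally identified with $S_{t_0}/\Gamma_0$ (this follows, e.g., by choosing a compact fundamental set $K$ for $\Gamma$ and noting that only finitely many values $t_\gamma$ occur with $|t_0-t_\gamma|\le\operatorname{diam}(K)$, so $\Gamma_0$ is already co-compact on the slice). These are minor; the argument is sound.
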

\begin{proof}
The last assumption shows that every element $\gamma\in\Gamma$ has the form $\gamma(s,t,x)=(s+s_\gamma,t+t_\gamma,\psi_\gamma(x))$,
where $s_\gamma$ and $t_\gamma$ are real numbers and $\psi_\gamma$ is a diffeomorphism of $N$. The condition that $\gamma$ is an isometry of the metric $\d s^2+\d t^2+e^{2c(t)}g_N$ reads 
$$e^{2c(t)}g_N(X,Y)=e^{2c(t+t_\gamma)}g_N((\psi_\gamma)_*(X),(\psi_\gamma)_*(Y)),\qquad\forall\ t\in\R,\ X,Y\in\T N.$$
Thus $\psi_\gamma$ is a homothety of $(N,g_N)$ with ratio 
\be\label{rhog}\rho_\gamma:=e^{c(t)-c(t+t_\gamma)}\ee
(note that, in particular, this expression does not depend on $t$). 

Assume, for a contradiction, that $c$ is not $\Gamma$-invariant. By \eqref{rhog}, there exists $\gamma_0\in\Gamma$ such that $\rho_{\gamma_0}<1$. The map $\psi_{\gamma_0}$ is a contraction of the complete metric space $(N,d_N)$, where $d_N$ is the distance induced by $g_N$. By the Banach fixed point theorem, $\psi_{\gamma_0}$ has a unique fixed point $x_0\in N$ and 
\be\label{lim}\lim_{k\to\infty}\psi_{\gamma_0}^k(x)=x_0,\qquad\forall\ x\in N.\ee
Let $\gamma$ be any element of $\Gamma$. For every integer $k\in\N$ we have 
$$y_k:=(\gamma_0^k\circ \gamma\circ\gamma_0^{-k})(0,0,x_0)=(s_{\gamma},t_{\gamma},\psi_{\gamma_0}^k(\psi_{\gamma}(x_0))),$$
so by \eqref{lim}, the sequence $\{y_k\}$ converges to $(s_{\gamma},t_{\gamma},x_0)=:y_0$. Since the action of $\Gamma$ is totally discontinuous, this implies that $y_k=y_0$ for $k$ sufficiently large, whence $\psi_{\gamma}(x_0)=x_0$ for every $\gamma\in\Gamma$.

Consider now the continuous map $f:\R^2\times N\to\R_+$ defined by $f(s,t,x):=e^{c(t)}d_N(x,x_0)$. Using \eqref{rhog} an immediate induction shows that $$c(nt_{\gamma_0})=c(0)-n\ln(\rho_{\gamma_0}),\qquad\forall\ n\in\Z,$$
thus showing that $c$ is onto on $\R$. In particular, $f$ is onto on $\R_+$.

For every $\gamma\in\Gamma$ we have using \eqref{rhog}:
\beq(\gamma^*f)(s,t,x)&=&f(s+s_\gamma,t+t_\gamma,\psi_\gamma(x))=e^{c(t+t_\gamma)}d_N(\psi_\gamma(x),x_0)\\
&=&e^{c(t+t_\gamma)}d_N(\psi_\gamma(x),\psi_\gamma(x_0))=\rho_\gamma e^{c(t+t_\gamma)}d_N(x,x_0)=e^{c(t)}d_N(x,x_0)\\&=&f(s,t,x).
\eeq
Thus $f$ is $\Gamma$-invariant and induces a continuous map $\tilde f:\Gamma\backslash(\R^2\times N)\to\R$. Since $f$ is onto, $\tilde f$ is also onto, contradicting the fact that the action of $\Gamma$ on $\R^2\times N$ is co-compact.
\end{proof}

Summarizing, we have proved:

\begin{theorem} \label{main} Let $(M,g,J,\theta)$ be a compact lcK manifold of complex dimension $n>2$ admitting a non-trivial parallel vector field $V$. Then the following (exclusive) possibilities occur:
\begin{enumerate}
\item[(i)] The Lee form $\theta$ is a (non-zero) constant multiple of $V^\flat$, so $M$ is a Vaisman lcK manifold.
\item[(ii)] $(M,g,\Omega,\theta)$ is {\em globally} conformally K\"ahler and there exists a complete simply connected K\"ahler manifold $(N,g_N,\Omega_N)$ of real dimension $2n-2$, a smooth real function $c:\R\to\R$ and a discrete co-compact group $\Gamma$ acting freely and totally discontinuously on $\R^2\times N$, preserving the metric $\d s^2+\d t^2+e^{2c(t)}g_N$, the Hermitian $2$-form $\d s\wedge\d t+e^{2c(t)}\Omega_N$ and the vector fields $\partial_s$ and $\partial_t$, such that $M$ is diffeomorphic to $\Gamma\backslash(\R^2\times N)$, and the structure $(g,\Omega,\theta)$ corresponds to $(\d s^2+\d t^2+e^{2c(t)}g_N, \d s\wedge\d t+e^{2c(t)}\Omega_N,\d c)$ through this diffeomorphism.
\end{enumerate}
\end{theorem}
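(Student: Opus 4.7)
The proof is essentially an assembly of the preceding lemmas, so my plan is to organize the logical flow and dispose of the two cases cleanly.

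First, I would apply the lemma that establishes $\theta = aV^\flat + bJV^\flat$, then invoke Lemma \ref{l2} to obtain $ab=0$, $f=\tfrac12 a^2$, and $\alpha=\tfrac12(a^2-b^2)$. Substituting these values into \eqref{da} gives $\d a = \tfrac12 a^2 V - \tfrac12 a(aV+bJV) = 0$ (using $ab=0$), so $a$ is a real constant. This is where the dichotomy is forced on us: either $a\ne 0$, in which case Lemma \ref{l2} mandates $b=0$, or $a=0$.

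In the first case ($a\ne 0$, $b=0$), the identity $\theta = aV^\flat$ combined with the constancy of $a$ and the parallelism of $V$ immediately shows that $\theta$ is parallel. Thus $(M,g,J)$ is Vaisman, which is exactly conclusion (i).

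For the second case ($a=0$), the formulas $f=0$, $\alpha=-\tfrac12 b^2$ and $\theta=bJV^\flat$ hold, and I would apply the universal cover structure lemma to identify $(\tilde M,\tilde g,\tilde\Omega)$ isometrically and holomorphically with $\R^2\times N$ endowed with $\d s^2+\d t^2+e^{2c(t)}g_N$ and K\"ahler form $\d s\wedge\d t + e^{2c(t)}\Omega_N$. At this point one checks that $\Gamma := \pi_1(M)$, viewed as the deck transformation group, satisfies all the hypotheses of the last lemma: it acts freely and totally discontinuously by isometries of $\tilde g$, and since $V$ and $JV$ are globally defined vector fields on $M$, their lifts $\partial_s$ and $\partial_t$ on $\tilde M$ are automatically $\Gamma$-invariant. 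The lemma then yields that $c$ is $\Gamma$-invariant, hence descends to a smooth function on $M$ whose differential pulls back to $\d c$ on $\tilde M$. Since the computation in the text shows that the Lee form of $\tilde M$ equals $2\d c$, the Lee form on $M$ is exact, establishing that $M$ is globally conformally K\"ahler and yielding conclusion (ii).

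The two cases are mutually exclusive because in case (i) the Lee form is a non-zero parallel form (hence not exact on a compact manifold), while in case (ii) it is exact. I don't anticipate a real obstacle here: all the analytic and geometric content has been absorbed into the lemmas, and what remains is only to verify that the deck group satisfies the hypotheses of the last lemma and to read off the explicit identification claimed in (ii) from the structure lemma.
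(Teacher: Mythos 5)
Your proposal is correct and follows essentially the same route as the paper: the theorem is obtained by assembling the preceding lemmas, deducing from \eqref{da} and Lemma \ref{l2} that $a$ is constant, and splitting into the cases $a\ne 0$ (Vaisman) and $a=0$ (universal cover structure plus $\Gamma$-invariance of $c$). Your added remark on exclusivity — that a non-zero parallel Lee form cannot be exact on a compact manifold — is a correct and welcome detail that the paper leaves implicit.
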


\begin{example} Typically, one can obtain examples of type (ii) by taking $(N,g_N,\Omega_N)$ to be any compact K\"ahler manifold, $c$ any $T$-periodic function, and $\Gamma$ the group of isometries of $(\R^2\times N,\d s^2+\d t^2+e^{2c(t)}g_N)$ generated by the maps $\gamma_1:(s,t,x)\mapsto (s+1,t,x)$ and $\gamma_2:(s,t,x)\mapsto (s,t+T,x)$. 
\end{example}

\bibliographystyle{amsplain}

\end{document}